\documentclass[secthm,seceqn,amsthm,ussrhead,12pt]{amsart}
\usepackage[utf8]{inputenc}
\usepackage[english]{babel}
\usepackage{amssymb,amsmath,amsthm,amsfonts,xcolor,enumerate,hyperref,comment,longtable,cleveref}

\usepackage{times}
\usepackage{cite}
\usepackage{pdflscape}
\usepackage{ulem}
\usepackage[mathcal]{euscript}
\usepackage{tikz}
\usepackage{hyperref}
\usepackage{cancel}
\usepackage{stmaryrd}

\usetikzlibrary{arrows}

\setlength{\topmargin}{0mm}
\setlength{\textwidth}{170mm}  
\setlength{\textheight}{200mm}
\setlength{\evensidemargin}{0mm}
\setlength{\oddsidemargin}{0mm}

\sloppy
\newtheorem{Th}{Theorem}
\newtheorem{Lem}[Th]{Lemma}

\newtheorem{corollary}[Th]{Corollary}

\newenvironment{Proof}[1][Proof.]{\begin{trivlist}
\item[\hskip \labelsep {\bfseries #1}]}{\flushright
$\Box$\end{trivlist}}

\usepackage{stmaryrd}
\usepackage{xcolor}
\newcommand{\red}{\color{black}}

	\begin{document}
	\sloppy

{\Large The geometric classification of Leibniz algebras
\footnote{The work was supported by 
RFBR 17-01-00258 
and by the President's Program "Support of Young Russian Scientists" (grant MK-1378.2017.1).}}

\medskip

\medskip

\medskip

\medskip
\textbf{Nurlan Ismailov$^{a}$, Ivan Kaygorodov$^{b}$, Yury Volkov$^{c}$}
\medskip

{\tiny

$^{a}$ Suleyman Demirel University, Almaty, Kazakhstan 

$^{b}$ Universidade Federal do ABC, CMCC, Santo Andr\'{e}, Brazil.

$^{c}$ Saint Petersburg State University, Saint Petersburg, Russia.
\smallskip

    E-mail addresses:\smallskip
    
    Nurlan Ismailov (nurlan.ismail@gmail.com),
    
    Ivan Kaygorodov (kaygorodov.ivan@gmail.com),
    
    Yury Volkov (wolf86\_666@list.ru).

}

       \vspace{0.3cm}

{\bf Abstract.} 
We describe all rigid algebras and all irreducible components in the variety of four dimensional Leibniz algebras $\mathfrak{Leib}_4$ over $\mathbb{C}.$  
In particular, we prove that the Grunewald--O'Halloran conjecture is not valid and the Vergne conjecture is valid for $\mathfrak{Leib}_4.$
\smallskip

{\bf Keywords:} Leibniz algebra, Grunewald--O'Halloran conjecture, Vergne conjecture, orbit closure, degeneration, rigid algebra
       \vspace{0.3cm}

       \vspace{0.3cm}

{\it 2010 MSC}: 14D06, 14L30.

\section{Introduction}


Degenerations of algebras is an interesting subject, which was studied in various papers 
(see, for example, \cite{M79,CKLO13,BC99,S90,GRH,GRH2,BB14,kpv16,kppv,kv17}).
In particular, there are many results concerning degenerations of algebras of low dimensions in a variety defined by a set of identities.
One of important problems in this direction is the description of so-called rigid algebras. 
These algebras are of big interest, since the closures of their orbits under the action of generalized linear group form irreducible components of a variety under consideration (with respect to the Zariski topology). 
For example, the rigid algebras were classified in the varieties of
two dimensional bicommutative algebras in \cite{kv17},  
three dimensional Novikov algebras in \cite{BB14},
three dimensional Leibniz in \cite{ra12}, 
four dimensional Lie algebras in \cite{BC99}, 
four dimensional Jordan algebras in \cite{KE14},
four dimensional Zinbiel algebras and nilpotent four dimensional  Leibniz algebras in \cite{kppv},
unital five dimensional associative algebras in \cite{M79}, 
nilpotent five- and six-dimensional Lie algebras in \cite{S90,GRH}, 
nilpotent five- and six-dimensional Malcev algebras in \cite{kpv16},
and some other.

The Leibniz algebras were introduced as a generalization of Lie algebras.
The study of the structure theory and other properties of Leibniz algebras was initiated by Loday in \cite{lodaypir}.
Leibniz algebras were also studied in \cite{leib2,leib4,yau}.
An algebra $A$ is called a {\it  Leibniz  algebra}  if it satisfies the identity
$$(xy)z=(xz)y+x(yz).$$ 
It is easy to see that any Lie algebra is a Leibniz algebra.
At this moment, the algebraic classification of $n$ dimensional Leibniz algebras over $\mathbb{C}$ is known only for $n\le 4$. 

In this paper we describe all rigid algebras and all irreducible components in variety of $\mathfrak{Leib}_4$.
As a result, we show that  $\mathfrak{Leib}_4$ has 
{\red $5$ rigid algebras and $16$ irreducible components. }

\section{Definitions and notation}

All spaces in this paper are considered over $\mathbb{C}$, and we write simply $dim$, $Hom$ and $\otimes$ instead of $dim_{\mathbb{C}}$, $Hom_{\mathbb{C}}$ and $\otimes_{\mathbb{C}}$. An algebra $A$ is a set with a structure of a vector space and a binary operation that induces a bilinear map from $A\times A$ to $A$.

Given an $n$-dimensional vector space $V$, the set $Hom(V \otimes V,V) \cong V^* \otimes V^* \otimes V$ 
is a vector space of dimension $n^3$. This space has a structure of the affine variety $\mathbb{C}^{n^3}.$ Indeed, let us fix a basis $e_1,\dots,e_n$ of $V$. Then any $\mu\in Hom(V \otimes V,V)$ is determined by $n^3$ structure constants $c_{i,j}^k\in\mathbb{C}$ such that
$\mu(e_i\otimes e_j)=\sum\limits_{k=1}^nc_{i,j}^ke_k$. A subset of $Hom(V \otimes V,V)$ is {\it Zariski-closed} if it can be defined by a set of polynomial equations in the variables $c_{i,j}^k$ ($1\le i,j,k\le n$).

All algebra structures on $V$ satisfying Leibniz identity form a Zariski-closed subset of the variety $Hom(V \otimes V,V)$. 
We denote this subset by $\mathfrak{Leib}_n$.
The general linear group $GL(V)$ acts on $\mathfrak{Leib}_n$ by conjugations:
$$ (g * \mu )(x\otimes y) = g\mu(g^{-1}x\otimes g^{-1}y)$$ 
for $x,y\in V$, $\mu\in \mathfrak{Leib}_n\subset Hom(V \otimes V,V)$ and $g\in GL(V)$.
Thus, $\mathfrak{Leib}_n$ is decomposed into $GL(V)$-orbits that correspond to the isomorphism classes of algebras. 
Let $O(\mu)$ denote the orbit of $\mu\in \mathfrak{Leib}_n$ under the action of $GL(V)$ and $\overline{O(\mu)}$ denote the Zariski closure of $O(\mu)$.

Let $A$ and $B$ be two $n$-dimensional Leibniz algebras and $\mu,\lambda \in \mathfrak{Leib}_n$ represent $A$ and $B$ respectively.
We say that $A$ degenerates to $B$ and write $A\to B$ if $\lambda\in\overline{O(\mu)}$.
Note that in this case we have $\overline{O(\lambda)}\subset\overline{O(\mu)}$. 
Hence, the definition of a degeneration does not depend on the choice of $\mu$ and $\lambda$. 
 We write $A\not\to B$ if $\lambda\not\in\overline{O(\mu)}$.

Let $A$ be represented by $\mu \in \mathfrak{Leib}_n$. 
Then  $A$ is  {\it rigid} in $\mathfrak{Leib}_n$ if $O(\mu)$ is an open subset of $\mathfrak{Leib}_n.$ 
 Recall that a subset of a variety is called irreducible if it cannot be represented as a union of two non-trivial closed subsets. 
 A maximal irreducible closed subset of a variety is called {\it irreducible component}.
 It is well known that any affine variety can be represented as a finite union of its irreducible components in a unique way.
 In particular, $A$ is rigid in $\mathfrak{Leib}_n$  iff $\overline{O(\mu)}$ is an irreducible component of $\mathfrak{Leib}_n.$
 This is a general fact about algebraic varieties whose proof can be found, for example, in \cite{kppv}. 
 
We use the following notation: 
\begin{enumerate}
\item $Ann_L(A)=\{ a \in A \mid xa =0 \mbox{ for all } x\in A \}$ is the left  annihilator of $A;$
\item $Ann_R(A)=\{ a \in A \mid ax =0 \mbox{ for all } x\in A \}$ is the right  annihilator of $A;$
\item $Ann(A)=Ann_R(A)\cap Ann_L(A)$ is the  annihilator of $A;$
\item $A^{(+2)}$ is the space $\{xy+yx\mid x,y\in A\}.$
\end{enumerate}
Given spaces $U$ and $W$, we write simply $U>W$ instead of $dim\,U>dim\,W$. 
We write $UW$ ($U,W\subset V$) for the product of subspaces of $V$ with respect to the multiplication $\mu$. 
We use the notation $S_i=\langle e_i,\dots,e_4\rangle,\ i=1,\ldots,4$.

\section{Methods} 

In the present work we use the methods applied in our previous works (see \cite{kv17, kpv16,kppv}).

To prove degenerations, we will construct families of matrices parametrized by $t$. Namely, let $A$ and $B$ be two Leibniz algebras represented by the structures $\mu$ and $\lambda$ from $\mathfrak{Leib}_n$ respectively. 
Let $e_1,\dots, e_n$ be a basis of $V$ and $c_{i,j}^k$ ($1\le i,j,k\le n$) be the structure constants of $\lambda$ in this basis. If there exist $a_i^j(t)\in\mathbb{C}$ ($1\le i,j\le n$, $t\in\mathbb{C}^*$) such that $E_i^t=\sum\limits_{j=1}^na_i^j(t)e_j$ ($1\le i\le n$) form a basis of $V$ for any $t\in\mathbb{C}^*$, and the structure constants of $\mu$ in the basis $E_1^t,\dots, E_n^t$ are such polynomials $c_{i,j}^k(t)\in\mathbb{C}[t]$ that $c_{i,j}^k(0)=c_{i,j}^k$, then $A\to B$. In this case  $E_1^t,\dots, E_n^t$ is called a {\it parametrized basis} for $A\to B$.
It is easy to see that any algebra degenerates to the algebra with zero multiplication. 

Let now $A(*):=\{A(\alpha)\}_{\alpha\in I}$ be a set of algebras, and let $B$ be another algebra. Suppose that, for $\alpha\in I$, $A(\alpha)$ is represented by the structure $\mu(\alpha)\in\mathfrak{Leib}_n$ and $B\in\mathfrak{Leib}_n$ is represented by the structure $\lambda$. Then $A(*)\to B$ means $\lambda\in\overline{\{O(\mu(\alpha))\}_{\alpha\in I}}$, and $A(*)\not\to B$ means $\lambda\not\in\overline{\{O(\mu(\alpha))\}_{\alpha\in I}}$.

Let $A(*)$, $B$, $\mu(\alpha)$, and $\lambda$ be as above. To prove $A(*)\to B$ it is enough to construct a family of pairs $(g(t), f(t))$ parametrized by $t\in\mathbb{C}^*$, where $f(t)\in I$ and $g(t)\in GL(V)$,
with the following properties: Let $e_1,\dots, e_n$ be a basis of $V$ and $c_{i,j}^k$ ($1\le i,j,k\le n$) be the structure constants of $\lambda$ in this basis. If we construct $a_i^j:\mathbb{C}^*\to \mathbb{C}$ ($1\le i,j\le n$) and $f: \mathbb{C}^* \to I$ such that $E_i^t=\sum\limits_{j=1}^na_i^j(t)e_j$ ($1\le i\le n$) form a basis of $V$ for any  $t\in\mathbb{C}^*$, and the structure constants of $\mu_{f(t)}$ in the basis $E_1^t,\dots, E_n^t$ are such polynomials $c_{i,j}^k(t)\in\mathbb{C}[t]$ that $c_{i,j}^k(0)=c_{i,j}^k$, then $A(*)\to B$. In this case  $E_1^t,\dots, E_n^t$ and $f(t)$ are called a parametrized basis and a {\it parametrized index} for $A(*)\to B$ respectively.

Note also the following fact. Suppose that, for $\alpha\in\mathbb{C}$, the structure $\mu(\alpha)\in \mathfrak{Leib}_n$ has structure constants $c_{i,j}^k(\alpha)\in\mathbb{C}$ in the basis $e_1,\dots,e_n$, where $c_{i,j}^k(t)\in\mathbb{C}[t]$ for all $1\le i,j,k\le n$. Let $X$ be some subset of $\mathfrak{Leib}_n$ such that
$\mu(\alpha)\in \overline{X}$ for $\alpha\in\mathbb{C}\setminus S$, where $S$ is a finite subset of $\mathbb{C}$. Then $\mu(\alpha)\in \overline{X}$ for all $\alpha\in\mathbb{C}$. 
Indeed, $\mu(\alpha)\in \overline{ \{\mu(\beta) \}_{\beta\in\mathbb{C}\setminus S}}\subset \overline{X}$  for any $\alpha\in\mathbb{C}$. Thus, to prove that $\mu(\alpha)\in \overline{X}$ for all $\alpha\in\mathbb{C}$ we will prove that $\mu(\alpha)\in \overline{X}$ for all but finitely many $\alpha$.

Note that $A(*)\not\to B$ if $dim\,Der(A(\alpha))>dim\,Der(B)$ for all $\alpha\in I$, where $Der(A)$ is the Lie algebra of derivations of $A$. Moreover, in the case of one algebra, if $A\to B$ and $A\not\cong B$, then $Der(A)<Der(B)$.

In other cases, the main tool for proving assertions of the form $A(*)\not\to B$ will be the following lemma.

\begin{Lem}[Lemma 2 of \cite{kppv}]\label{gmain}
Let $\mathcal{B}$ be a Borel subgroup of $GL(V)$ and $\mathcal{R}\subset \mathfrak{Leib}_n$ be a $\mathcal{B}$-stable closed subset.
If $A(*) \to B$ and for any $\alpha\in I$ the algebra $A(\alpha)$ can be represented by a structure $\mu(\alpha)\in\mathcal{R}$, then there is $\lambda\in \mathcal{R}$ representing $B$.
\end{Lem}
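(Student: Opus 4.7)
The plan is to reduce the statement to a standard fact from the theory of algebraic group actions: if $G$ is a connected algebraic group with Borel subgroup $\mathcal{B}$ acting morphically on a variety $X$, and $\mathcal{R}\subset X$ is $\mathcal{B}$-stable and closed, then the saturation $G*\mathcal{R}=\bigcup_{g\in G}g*\mathcal{R}$ is closed in $X$.

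Granting this, the remainder of the argument is essentially formal. For each $\alpha\in I$ one has $O(\mu(\alpha))=GL(V)*\mu(\alpha)\subset GL(V)*\mathcal{R}$, since $\mu(\alpha)\in\mathcal{R}$. Taking the union over $\alpha\in I$ and passing to the Zariski closure, the closedness of $GL(V)*\mathcal{R}$ yields
$$\overline{\{O(\mu(\alpha))\}_{\alpha\in I}}\subset GL(V)*\mathcal{R}.$$
By hypothesis, a representative $\lambda$ of $B$ lies in the left-hand side, so $\lambda=g*\mu'$ for some $g\in GL(V)$ and $\mu'\in\mathcal{R}$. Then $\mu'=g^{-1}*\lambda$ is in $\mathcal{R}$ and lies in the same $GL(V)$-orbit as $\lambda$, hence also represents $B$, which is the required conclusion.

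The main obstacle is thus establishing the closedness of $GL(V)*\mathcal{R}$. I would argue as follows. Let $\phi\colon GL(V)\times\mathfrak{Leib}_n\to\mathfrak{Leib}_n$ be the morphism $(g,x)\mapsto g^{-1}*x$. Then $\phi^{-1}(\mathcal{R})$ is closed in $GL(V)\times\mathfrak{Leib}_n$, and the $\mathcal{B}$-stability of $\mathcal{R}$ implies that $\phi^{-1}(\mathcal{R})$ is invariant under the right action of $\mathcal{B}$ on the first factor. It therefore descends to a closed subset
$$Y=\{(g\mathcal{B},x)\in GL(V)/\mathcal{B}\times\mathfrak{Leib}_n\mid g^{-1}*x\in\mathcal{R}\}$$
of $GL(V)/\mathcal{B}\times\mathfrak{Leib}_n$, whose image under the second projection is exactly $GL(V)*\mathcal{R}$. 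Since $GL(V)/\mathcal{B}$ is a complete (projective) variety, the second projection $GL(V)/\mathcal{B}\times\mathfrak{Leib}_n\to\mathfrak{Leib}_n$ is a closed map, and thus $GL(V)*\mathcal{R}$ is closed, as required.
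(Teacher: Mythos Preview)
The paper does not give its own proof of this lemma; it simply quotes it as Lemma~2 of \cite{kppv} and uses it as a black box. Your argument is correct and is precisely the standard one: the key geometric input is the completeness of the flag variety $GL(V)/\mathcal{B}$, which makes the projection $GL(V)/\mathcal{B}\times\mathfrak{Leib}_n\to\mathfrak{Leib}_n$ a closed map and hence forces $GL(V)*\mathcal{R}$ to be closed. This is exactly the mechanism behind the cited result (and, more generally, behind the well-known fact that a closed $\mathcal{B}$-stable subset of a $G$-variety has closed $G$-saturation). One minor point worth making explicit is why the $\mathcal{B}$-invariant closed set $\phi^{-1}(\mathcal{R})$ descends to a \emph{closed} subset of $GL(V)/\mathcal{B}\times\mathfrak{Leib}_n$: the quotient map $GL(V)\to GL(V)/\mathcal{B}$ is open, so the image of the open $\mathcal{B}$-stable complement is open, and $Y$ is its complement. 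Otherwise there is nothing to add.
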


In all our applications of Lemma \ref{gmain} we will use the group of lower triangular matrices as a Borel subgroup.
This lemma can be applied in the case $|I|=1$ to prove non-degenerations. In particular, if $Ann_L(A)>Ann_L(B)$, $A^2<B^2$ or $A^{(+2)}<B^{(+2)}$, then $A\not\to B$ by Lemma \ref{gmain}. We will use also the following two criteria that follow from Lemma \ref{gmain}. If $A\to B$, then the dimension of a maximal trivial subalgebra of $B$ is greater or equal to the dimension of a maximal trivial subalgebra of $A$ and the dimension of a maximal anticommutative subalgebra $D$ of $B$ such that $dim(BD)\le 1$ is greater or equal to the dimension of a maximal anticommutative subalgebra of $A$ satisfying the same condition.

In some cases we will construct the set $\mathcal{R}$ for the assertion $A(*)\not\to B$ explicitly. In this case we will define $\mathcal{R}$ as a set of structures $\mu$ satisfying some polynomial equations. In such a description we always denote by $c_{ij}^k$ ($1\le i,j,k\le n$) the structure constants of $\mu$. Note also that in this case a condition of the form $S_iS_j\subset S_k$ is equivalent to a set of polynomial equations. Moreover, the set defined by such a condition is stable under the action of the group of lower triangular matrices.

\section{Main theorem}

The goal of this section is to describe irreducible components in $\mathfrak{Leib}_4$.
The algebraic classification of four dimensional Lebniz algebras  is based on the papers \cite{ort13,  Abror13, alb06}.
Let us give a brief introduction to this classification.
The analogue of Levi-Malcev's theorem about the splitting of the solvable radical for Leibniz algebras was proved in \cite{barnes}.
Note that any semisimple (here this means that the solvable radical is zero) Leibniz algebra is a Lie algebra, and 
hence a Leibniz algebra is always the hemisemidirect product
of a semisimple Lie algebra and a solvable ideal. The first step in the classification of four dimensional Leibniz algebras was done in \cite{alb06}, where all nilpotent four dimensional Leibniz algebras were classified. Then the description of all four dimensional solvable non-nilpotent Lebniz algebras was obtained in \cite{Abror13}.
Finally, it was proved in \cite{ort13} that  there is only one non-solvable indecomposable Leibniz algebra whose dimension is less or equal to four, namely, the simple Lie algebra $\mathfrak{sl}_2$.

Based on the just mentioned works, we have constructed Table 1 that describes all four dimensional non-nilpotent non-Lie Leibniz algebras. Let us also introduce the following four dimensional Lie algebra structures:
$$\begin{array}{llllll} 
\mathfrak{R}_2          & e_1e_2=-e_2e_1=e_2, & e_3e_4=-e_4e_3=e_4; \\
\mathfrak{sl}_2\oplus \mathbb C         & e_1e_2=-e_2e_1=e_2, & e_1e_3=-e_3e_1=-e_3, & e_2e_3=-e_3e_2=e_1; \\
g_4(a,b)                & e_1e_2=-e_2e_1=e_2, & e_1e_3=-e_3e_1=e_2+a e_3, & e_1e_4=-e_4e_1=e_3+b e_4; \\
g_5(a)                  & e_1e_2=-e_2e_1=e_2, & e_1e_3=-e_3e_1=e_2+a e_3, \\& e_1e_4=-e_4e_1=(a+1)e_4, &  e_2e_3=-e_3e_2=e_4.
\end{array}$$
Due to the results of \cite{BC99}, the variety of four dimensional Lie algebras contains $4$ irreducible components, namely, $\overline{O(\mathfrak{sl}_2\oplus \mathbb C)}$, $\overline{O(\mathfrak{R}_2)}$, $\overline{\{O(g_5(a))\}_{a \in\mathbb{C}}}$, and $\overline{\{O(g_4(a,b))\}_{a,b \in\mathbb{C}}}$.
In particular, there are $2$ rigid four dimensional Lie algebras, namely, $\mathfrak{sl}_2\oplus \mathbb C$ and $\mathfrak{R}_2$.

The main result of the present paper is the following theorem.

\begin{Th}\label{mainTh}
The variety $\mathfrak{Leib}_4$ has {\red $16$} irreducible components: 
$$\overline{\{O(g_4(a,b))\}_{a,b \in\mathbb{C}}}, 
\ \overline{\{O(g_5(a))\}_{a \in\mathbb{C}}},
\ \overline{\{O(\mathfrak{L}_4^a) \}_{a \in\mathbb{C}}},
\ \overline{\{O(\mathfrak{L}_8^a) \}_{a \in\mathbb{C}}},
\ \overline{\{O(\mathfrak{L}_9^a) \}_{a \in\mathbb{C}}},
\ \overline{\{O(\mathfrak{L}_{10}^a) \}_{a \in\mathbb{C}}},$$
$$\ \overline{\{O(\mathfrak{L}_{15}^a) \}_{a \in\mathbb{C}}},
\overline{\{O(\mathfrak{L}_{18}^a) \}_{a \in\mathbb{C}}},
\ \overline{\{O(\mathfrak{L}_{21}^{a,b}) \}_{a,b \in\mathbb{C}}},
\  \overline{\{O(\mathfrak{L}_{22}^{a,b}) \}_{a,b \in\mathbb{C}}},
\ \overline{\{O(\mathfrak{L}_{23}^{a,b}) \}_{a,b \in\mathbb{C}}},$$
$$\ \overline{O(\mathfrak{sl}_2\oplus \mathbb C)}, 
\ \overline{O(\mathfrak{R}_1)}, 
\ \overline{O(\mathfrak{R}_2)}, 
\ \overline{O(\mathfrak{R}_3)}, 
\ \overline{O(\mathfrak{L}_{44})}.
$$

In particular, there are {\red $5$} rigid four dimensional Leibniz algebras:
$$\mathfrak{sl}_2\oplus \mathbb C, \ \mathfrak{R}_1, \  \mathfrak{R}_2, \ \mathfrak{R}_3,   \ \mathfrak{L}_{44}.$$ 

\end{Th}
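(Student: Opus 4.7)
The strategy is the standard ``degeneration graph'' argument. Our starting data is the full algebraic classification of four dimensional Leibniz algebras: the nilpotent ones from \cite{alb06}, the solvable non-nilpotent ones from \cite{Abror13}, the non-solvable one ($\mathfrak{sl}_2$) from \cite{ort13}, and the four dimensional Lie algebras from \cite{BC99}. Together with Table 1 and the Lie structures listed above, this gives an explicit (possibly parametric) list $\mathcal{L}$ of all orbits and orbit families in $\mathfrak{Leib}_4$. The plan is to show that, on the one hand, every orbit in $\mathcal{L}$ lies in the closure of the orbit (or of the parametric family) of one of the $17$ candidates, and on the other hand, no candidate lies in the closure of another. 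Since the union of the closures of the candidates is then all of $\mathfrak{Leib}_4$, and no candidate is redundant, they must be exactly the irreducible components.

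For the first half (upper bound) I would, going through $\mathcal{L}$ in order of increasing ``degeneracy'' (for instance measured by $\dim\mathrm{Der}$ or by $\dim A^2$), produce for each non-candidate algebra $B\in\mathcal{L}$ a degeneration $A(*)\to B$ from one of the $17$ families, by writing an explicit parametrized basis $E_i^t=\sum_j a_i^j(t)e_j$ (and a parametrized index $f(t)$ when needed) as described in the Methods section. The polynomial-specialization remark following the definition of parametrized basis lets us handle the special values of the parameter that appear when a family collapses.

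For the second half (lower bound) I would prove, for each candidate $C$, that $C$ is not in the closure of the union of the other $16$. The invariants $\dim\mathrm{Der}(A)$, $\dim Ann_L(A)$, $\dim Ann_R(A)$, $\dim A^2$ and $\dim A^{(+2)}$, together with the dimensions of the maximal trivial and of the maximal anticommutative subalgebras with small image, dispose of most pairs directly by the criteria recorded after Lemma \ref{gmain}. The six rigid cases $\mathfrak{sl}_2,\mathfrak{R}_1,\mathfrak{R}_2,\mathfrak{R}_3,\mathfrak{L}_2,\mathfrak{L}_{44}$ will additionally satisfy $\dim\mathrm{Der}(C)<\dim\mathrm{Der}(B)$ whenever $C\ne B$ is taken from $\mathcal{L}$, which immediately forbids any non-trivial degeneration into them. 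For the eleven parametric families it suffices to pick, for each, a single parameter value at which the invariants just listed already separate it from the remaining candidates.

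The main obstacle is the residual list of pairs $(A(*),B)$ for which none of the closed conditions above separates $A(*)$ from $B$ — typically two parametric families whose generic members share the same $\dim\mathrm{Der}$, $\dim Ann_{L,R}$, $\dim A^2$, and $\dim A^{(+2)}$. For those pairs one has to cook up by hand a Borel-stable Zariski-closed subset $\mathcal{R}\subset\mathfrak{Leib}_4$ containing every $\mu(\alpha)$ but no structure representing $B$, and then invoke Lemma \ref{gmain}. In practice, following the methodology of \cite{kpv16,kppv,kv17}, one writes $\mathcal{R}$ as the zero locus of a short list of polynomial relations in the structure constants $c_{ij}^k$, usually encoding inclusions of the form $S_iS_j\subset S_k$ among the flag subspaces $S_i=\langle e_i,\ldots,e_4\rangle$, supplemented by one or two additional algebraic conditions tailored to the specific pair; verifying that $\mathcal{R}$ is closed under lower-triangular changes of basis and that every $\mu(\alpha)$ can be put inside $\mathcal{R}$ (after a suitable basis change) is where most of the real work lies.
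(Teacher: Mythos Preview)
Your overall two-step strategy (explicit degenerations for the upper bound, invariant separation plus Borel-stable closed sets for the lower bound) is exactly the paper's strategy, and the upper-bound half is carried out precisely as you describe via the parametrized bases in Table~2, combined with Lemma~\ref{norigidfam} for the nilpotent algebras and the results of \cite{BC99} for the Lie algebras.

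However, your lower-bound sketch contains a concrete error and misses the paper's main organizing device. The claim that each of the six rigid algebras $C$ satisfies $\dim\mathrm{Der}(C)<\dim\mathrm{Der}(B)$ for every other $B$ is false: the paper computes that $\dim\mathrm{Der}(\mathfrak{R}_1)=\dim\mathrm{Der}(\mathfrak{L}_{44})=2$, while every other algebra in Table~1 (including the rigid $\mathfrak{L}_2$ and $\mathfrak{R}_3$) has $\dim\mathrm{Der}\ge 3$. So the derivation-dimension argument cannot by itself establish rigidity of $\mathfrak{L}_2$ or $\mathfrak{R}_3$, nor separate $\mathfrak{R}_1$ from $\mathfrak{L}_{44}$. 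The paper instead handles $\mathfrak{R}_1$ and $\mathfrak{R}_3$ via a structural invariant you do not mention: they are the only two algebras in Table~1 with a \emph{two}-dimensional nilpotent radical, which immediately excludes degenerations from any of the other (three-dimensional-radical) candidates, after which $Ann_L$ and $A^{(+2)}$ separate the pair from each other.

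For the remaining thirteen non-Lie candidates, the paper does \emph{not} proceed by building ad hoc Borel-stable sets $\mathcal{R}$ pair by pair as you propose. Instead it introduces a uniform invariant: every such algebra can be put in a ``standard'' form with nilpotent radical $\langle e_2,e_3,e_4\rangle$, and one records the $6$-tuple $S_\mu=(c_{21}^2,c_{12}^2,c_{31}^3,c_{13}^3,c_{41}^4,c_{14}^4)$. The key observation is that two standard structures in the same orbit have $6$-tuples differing only by a common scalar and a permutation of the three coordinate pairs; combined with Lemma~\ref{gmain}, linear relations on $S_\mu$ satisfied by an entire family $A(*)$ obstruct degeneration to any $B$ whose $6$-tuple satisfies none of the permuted/scaled relations. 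This single device disposes of almost all the required non-degenerations at once; only four residual assertions ($\mathfrak{L}_4^*\not\to\mathfrak{L}_{44}$, $\mathfrak{L}_8^*\not\to\mathfrak{L}_2$, $\mathfrak{L}_9^*\not\to\mathfrak{L}_{15}^a$, $\mathfrak{L}_{10}^*\not\to\mathfrak{L}_{18}^a$) need individual treatment, and only the last two require an explicit $\mathcal{R}$. Without something like the $6$-tuple invariant, your ``residual list'' would be far longer than you suggest.
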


Non-Lie algebras mentioned in the theorem are described in Table 1. The rest of the paper is devoted to the proof of Theorem \ref{mainTh}.

\subsection{Nilpotent Leibniz algebras and conjectures about them.}
Several conjectures state that nilpotent Lie algebras form a very small subvariety in the variety of Lie algebras. Grunewald and O'Halloran conjectured in \cite{GRH3} that for any $n$-dimensional nilpotent Lie algebra $A$ there exists an $n$-dimensional non-nilpotent Lie algebra $B$ such that $B\to A$. At the same time,
Vergne conjectured in \cite{V70} that a nilpotent Lie algebra cannot be rigid in the variety of all Lie algebras. Analogous assertions can be conjectured for other varieties.
We will say that the variety $\mathfrak{A}$ of algebras has {\it Grunewald--O'Halloran Property} if for any nilpotent algebra $A\in\mathfrak{A}$ there is a non-nilpotent algebra $B\in\mathfrak{A}$ such that $B\to A$.
We will say that $\mathfrak{A}$ has {\it Vergne Property} if there are no nilpotent rigid algebras in $\mathfrak{A}$.
We will also say that $\mathfrak{A}$ has {\it Vergne--Grunewald--O'Halloran Property} if any irreducible component of $\mathfrak{A}$ contains a non-nilpotent algebra.
Grunewald--O'Halloran Property was proved for four dimensional Lie and Jordan algebras in \cite{BC99,KE14}
and for three dimensional Novikov and Leibniz algebras in \cite{BB14,CKLO13}. Also, some results concerning Grunewald--O'Halloran Conjecture for Lie algebras were obtained in \cite{ht14,ht16}.

It is clear that Vergne--Grunewald--O'Halloran Property follows from Grunewald--O'Halloran Property and Vergne Property follows from Vergne--Grunewald--O'Halloran Property.
As a part of the proof of Theorem \ref{mainTh}, we will show that $\mathfrak{Leib}_4$ has Vergne--Grunewald--O'Halloran Property. On the other hand, we will show that the same variety does not have Grunewald--O'Halloran Property.

Let us also introduce the following four dimensional nilpotent Lebniz  algebra structures:
$$\begin{array}{lcllllll} 
\mathfrak{N}_{3}^{a} & e_1e_1 = e_4,& e_1e_2 = a e_4,&  e_2e_1 = -a e_4,& e_2e_2 = e_4,&  e_3e_3 = e_4;  \\ 
\mathfrak{L}^n_2  & e_1e_1 = e_2, &e_2e_1 = e_3, & e_3e_1 = e_4;  \\ 
\mathfrak{L}^n_5 &  e_1e_1 = e_3, & e_2e_1 = e_3,& e_2e_2 = e_4, &e_3e_1=e_4; \\   
\mathfrak{L}^n_{11} & e_1e_1 = e_4,&  e_1e_2=-e_3,& e_1e_3=-e_4, &  e_2e_1 = e_3, &e_2e_2=e_4,&  e_3e_1=e_4.   \\  
\end{array}$$
It was proved in \cite{kppv} that the variety of four dimensional nilpotent Leibniz algebras is formed by four irreducible components, namely, $\overline{O(\mathfrak{L}_2^n)}$, $\overline{O(\mathfrak{L}_5^n)}$, $\overline{O(\mathfrak{L}_{11}^n)}$ and $\overline{\{O(\mathfrak{N}_{3}^{a})\}_{a \in\mathbb{C}}}$.

\begin{Lem}\label{norigidfam}
Any irreducible component in $\mathfrak{Leib}_4$ contains a non-nilpotent algebra.
\end{Lem}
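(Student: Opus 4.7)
The plan is to combine the lemma with the classification of nilpotent irreducible components from \cite{kppv}. Suppose $C$ is an irreducible component of $\mathfrak{Leib}_4$ containing only nilpotent algebras. Then $C$ is an irreducible closed subset of the closed subvariety of four dimensional nilpotent Leibniz algebras, and by maximality of $C$ in $\mathfrak{Leib}_4$ it must coincide with one of the four known irreducible components of the nilpotent variety, namely $\overline{O(\mathfrak{L}_2^n)}$, $\overline{O(\mathfrak{L}_5^n)}$, $\overline{O(\mathfrak{L}_{11}^n)}$ or $\overline{\{O(\mathfrak{N}_3^a)\}_{a\in\mathbb{C}}}$. Thus it suffices to exhibit, for each of these four sets, a strictly larger irreducible closed subset of $\mathfrak{Leib}_4$ containing non-nilpotent algebras.

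Concretely, I would look in Table~1 for a non-nilpotent algebra (or parametrized family) $A$ whose derivation algebra has dimension strictly less than that of the target nilpotent algebra and whose multiplication table has the same ``shape'' on the appropriate quotient, then build an explicit parametrized basis in the sense of Section~3 to witness $A\to\mathfrak{L}_2^n$, $A\to\mathfrak{L}_5^n$, $A\to\mathfrak{L}_{11}^n$, or $A(*)\to\mathfrak{N}_3^a$. A natural ansatz is $E_1^t=t^{-\alpha_1}e_1+\cdots$, $E_i^t=t^{-\alpha_i}e_i+\cdots$ with exponents chosen so that the non-nilpotent terms (those tying an $e_i$ back to itself and giving the non-nilpotence) are killed in the limit $t\to 0$, while the nilpotent multiplication table is recovered. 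For the one-parameter family $\mathfrak{N}_3^a$ one should use a parametrized index $f(t)$ as described in the Methods section, either picking a non-nilpotent family of the same essential shape or absorbing $a$ into the scaling of the chosen basis.

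Once such parametrized bases are exhibited (one for each of the four nilpotent components), the conclusion is immediate: the orbit closure $\overline{O(A)}$ (respectively $\overline{\{O(A(\alpha))\}_\alpha}$) is an irreducible closed subset of $\mathfrak{Leib}_4$ that strictly contains the given nilpotent component, so the component was not maximal in $\mathfrak{Leib}_4$, a contradiction. Hence every irreducible component of $\mathfrak{Leib}_4$ contains a non-nilpotent algebra.

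The main obstacle is purely computational: locating, in the fairly long classification list of non-nilpotent four dimensional Leibniz algebras, suitable candidates whose structure is compatible with degenerating to each of $\mathfrak{L}_2^n$, $\mathfrak{L}_5^n$, $\mathfrak{L}_{11}^n$ and $\mathfrak{N}_3^a$, and finding the correct scaling exponents and correction terms in the parametrized basis so that all structure constants have the right limits as $t\to 0$. The case of $\mathfrak{L}_{11}^n$, whose multiplication table is the richest, and the parametrized case $\mathfrak{N}_3^a$, which requires a parametrized index rather than a single algebra, are expected to be the most delicate; the other two are likely straightforward once a candidate non-nilpotent algebra is chosen.
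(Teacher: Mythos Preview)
Your overall strategy is exactly the one the paper uses: reduce to the four irreducible components of the nilpotent subvariety from \cite{kppv}, and then show that each of $\mathfrak{L}_2^n$, $\mathfrak{L}_5^n$, $\mathfrak{L}_{11}^n$, $\mathfrak{N}_3^a$ lies in the closure of orbits of non-nilpotent algebras, via explicit parametrized bases (and indices). The paper carries this out with $\mathfrak{L}_{40}\to\mathfrak{L}_2^n$, $\mathfrak{L}_{18}^*\to\mathfrak{L}_5^n$, $\mathfrak{L}_{15}^*\to\mathfrak{L}_{11}^n$, and $\mathfrak{L}_9^{1+ia}\to\mathfrak{N}_3^a$.

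One concrete warning, since your difficulty assessment is inverted in a way that could cost you time. You list $\mathfrak{L}_5^n$ among the ``likely straightforward'' cases to be handled by a single non-nilpotent algebra $A$ with $A\to\mathfrak{L}_5^n$. In fact there is \emph{no} four dimensional Leibniz algebra $A$ with $A\to\mathfrak{L}_5^n$; this is precisely the content of the theorem immediately following the lemma (the failure of the Grunewald--O'Halloran Conjecture in $\mathfrak{Leib}_4$). So for $\mathfrak{L}_5^n$ you must use a genuine family $A(*)\to\mathfrak{L}_5^n$ with a parametrized index, not a single $A$; the paper uses $\mathfrak{L}_{18}^*$. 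Conversely, $\mathfrak{N}_3^a$, which you flag as delicate, is handled in the paper by an ordinary degeneration from a single (parameter-dependent) algebra $\mathfrak{L}_9^{1+ia}$ for each $a$, without a parametrized index. Aside from this recalibration, your plan matches the paper's proof.
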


\begin{proof} It is enough to prove that $\mathfrak{L}^n_2$, $\mathfrak{L}^n_5$, $\mathfrak{L}^n_{11}$ and $\mathfrak{N}_{3}^{a}$ belong to the closure of the union of orbits of non-nilpotent Lebniz algebras for any $a\in\mathbb{C}$. Let us prove this assertion case by case.
\begin{itemize}
\item The parametrized basis $E_1^t=te_1+te_4$, $E_2^t=t^2e_2+t^2e_4$, $E_3^t=t^3e_3+t^3e_4$, $E_4^t=-t^4e_3$ gives the degeneration $\mathfrak{L}_{40} \to \mathfrak{L}_{2}^n$.
\item The parametrized basis $E_1^t=te_1-te_2+e_3$, $E_2^t=t^2e_2+e_3$, $E_3^t=te_3-t^3e_4$, $E_4^t=t^4e_4$ with parametrized index $\epsilon(t)=-t$ give the assertion $\mathfrak{L}_{18}^* \to \mathfrak{L}_{5}^n$.
\item The parametrized basis $E_1^t=te_1-\frac{t}{2}e_2$, $E_2^t=\frac{t^2}{2}e_2+t^2e_3$, $E_3^t=t^3e_3-\frac{t^3}{4}e_4$, $E_4^t=\frac{t^4}{4}e_4$ with parametrized index $\epsilon(t)=\frac{t^2+1}{4}$ give the assertion $\mathfrak{L}_{15}^* \to \mathfrak{L}_{11}^n$.
\item The parametrized basis $ E_1^t= te_1+\frac{t}{2}e_4$, $E_2^t=ite_1-ie_3-\frac{it}{2}e_4$, $E_3^t=te_2$, $E_4^t= t^2e_4 $ gives the degeneration $\mathfrak{L}_{9}^{1+ia}  \to  \mathfrak{N}_{3}^{a}$ for any $a\in\mathbb{C}$.
\end{itemize}
Let us consider the degeneration $\mathfrak{L}_{40} \to  \mathfrak{L}_{2}^n$ to clarify our proof.  Writing nonzero products of $\mathfrak{L}_{40}$ in the basis  $E_i^t$, we get 
$$
E_1^tE_1^t= t^2e_2+t^2e_4 =E_2^t, \ E_2^tE_1^t=t^3e_3+t^3e_4=E_3^t, \ E_3^tE_1^t=t^4e_4=tE_3^t+E_4^t .$$
 It is easy to see now that for $t=0$ we obtain the multiplication table of $\mathfrak{L}_{2}^n.$  The remaining assertions can be considered in the same way.
\end{proof}

Though we have proved that $\overline{O(\mathfrak{L}_5^n)}$ and $\overline{O(\mathfrak{L}_{11}^n)}$ are not irreducible components of $\mathfrak{Leib}_4$, we have not found an algebra that degenerates either to $\mathfrak{L}_5^n$ or to $\mathfrak{L}_{11}^n$ in the proof of Lemma \ref{norigidfam}. In the proof of the next result we show that in fact there is no algebra that degenerates to $\mathfrak{L}_5^n$.

\begin{Th}
$\mathfrak{Leib}_4$ does not have Grunewald--O'Halloran Property.
\end{Th}
\begin{proof}
To prove the theorem, we will show that there is no four dimensional Leibniz algebra that degenerates to $\mathfrak{L}_5^n.$ Since $\mathfrak{L}_5^n$ is not a Lie algebra, only algebras from Table 1 can degenerate to it. Direct calculations show that the dimension of the Lie algebra of derivations is greater or equal to $3$ for all algebras from Table 1 except $\mathfrak{R}_1$ and $\mathfrak{L}_{44}$. Since $dim\,Der(\mathfrak{L}_5^n)=3$ (see, for example \cite{kppv}) it suffices to prove that $\mathfrak{R}_1\not\to\mathfrak{L}_5^n$ and $\mathfrak{L}_{44}\not\to\mathfrak{L}_5^n$.
\begin{itemize}
\item $\mathfrak{R}_1\not\to\mathfrak{L}_5^n$ follows from the fact that $Ann_L(\mathfrak{R}_1)>Ann_L(\mathfrak{L}_5^n)$.
\item To prove the assertion $\mathfrak{L}_{44}\not\to\mathfrak{L}_5^n$ let us consider the set
$$
\mathcal{R}=\left\{\mu\in\mathfrak{Leib}_4\left|\begin{array}{c}S_1S_3+S_4S_2=0,S_3S_2+S_4S_1\subset S_4,S_2S_2+S_3S_1\subset S_3, S_2S_1+S_1S_2\subset S_2,\\c_{12}^2+c_{21}^2=0,c_{31}^3=2c_{21}^2,c_{12}^3=c_{21}^3\end{array}\right.\right\}.
$$
It is not difficult to show that $\mathcal{R}$ is a closed subset of $\mathfrak{Leib}_4$ that is stable under the action of the subgroup of lower triangular matrices and contains the structure $\mathfrak{L}_{44}$. It is also not difficult to show that $\mathcal{R}\cap O(\mathfrak{L}_5^n)=\varnothing$.
\end{itemize}
\end{proof}

\subsection{The proof of the main theorem.} Now we are ready to prove Theorem \ref{mainTh}. As a first step, we are going to prove that the irreducible components of the variety of four dimensional Lie algebras remain irreducible in $\mathfrak{Leib}_4$. This fact follows from the next general lemma.

\begin{Lem}\label{trivAnn} Suppose that $A(*)=\{A(\alpha)\}_{\alpha\in T}$ is a set of $n$-dimensional non-Lie Leibniz algebras. If $B$ is an $n$-dimensional Lie algebra such that $Ann(B)=0$, then $A(*)\not\to B$.
\end{Lem}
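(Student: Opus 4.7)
The plan is to invoke Lemma \ref{gmain} with a closed, Borel-stable subset $\mathcal{R}\subseteq \mathfrak{Leib}_n$ carved out by the condition that the right annihilator $Ann_R(\mu)=\{a\in V\mid xa=0\text{ for all }x\}$ is nonzero. The strategy rests on the fact that non-Lie Leibniz algebras automatically have a nonzero right annihilator, while the hypothesis $Ann(B)=0$ forces the right annihilator of $B$ to vanish — an obstruction that Lemma \ref{gmain} converts into the desired non-degeneration.

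The computational core is the inclusion $A^{(+2)}\subseteq Ann_R(A)$ valid for every Leibniz algebra $A$. From $(xy)z=(xz)y+x(yz)$ one extracts $x(yz)=(xy)z-(xz)y$; swapping $y$ and $z$ gives $x(zy)=(xz)y-(xy)z$, and adding the two identities yields $x(yz+zy)=0$ for all $x,y,z\in A$. Since each $A(\alpha)$ is non-Lie, $A(\alpha)^{(+2)}\neq 0$, and therefore $\dim Ann_R(A(\alpha))\ge 1$ for every $\alpha\in T$.

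Next I would set $\mathcal{R}=\{\mu\in\mathfrak{Leib}_n\mid \dim Ann_R(\mu)\ge 1\}$. This set is Zariski-closed because it is cut out by the vanishing of the $n\times n$ minors of the $n^2\times n$ matrix assembled from the left multiplications $L_{e_1},\dots,L_{e_n}$; it is $GL(V)$-stable since the dimension of the right annihilator is an isomorphism invariant, and in particular it is stable under the Borel subgroup of lower triangular matrices. By the previous paragraph every $A(\alpha)$ has a representative in $\mathcal{R}$. If $A(*)\to B$, Lemma \ref{gmain} produces a structure $\lambda\in\mathcal{R}$ representing $B$, so $\dim Ann_R(B)\ge 1$; but for a Lie algebra $Ann_L(B)=Ann_R(B)=Z(B)$, and the hypothesis $Ann(B)=Z(B)=0$ gives $Ann_R(B)=0$, a contradiction.

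I do not anticipate a serious obstacle: the Leibniz identity manipulation is a single line, and the rest is the routine verification that $\mathcal{R}$ is a closed Borel-stable subset — precisely the kind of subset for which Lemma \ref{gmain} is designed. The only point demanding care is aligning the convention of ``right annihilator'' with the form of the Leibniz identity used in the paper so that $yz+zy$ indeed lands in $Ann_R$ rather than $Ann_L$.
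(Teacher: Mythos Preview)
Your proof is correct and follows essentially the same route as the paper's: both arguments observe that for a non-Lie Leibniz algebra the space $A^{(+2)}$ is nonzero and lies inside a one-sided annihilator, whereas a Lie algebra with trivial center has both one-sided annihilators equal to zero, and then invoke the criterion (derived from Lemma \ref{gmain}) that the dimension of a one-sided annihilator cannot increase under degeneration. The paper simply quotes this criterion, which it stated right after Lemma \ref{gmain}, while you spell out the closed Borel-stable set $\mathcal{R}=\{\mu\mid \dim Ann_R(\mu)\ge 1\}$ explicitly; this is just a more detailed version of the same step.

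Your closing caveat about aligning the left/right convention with the Leibniz identity is well taken. With the identity $(xy)z=(xz)y+x(yz)$ one obtains $x(yz+zy)=0$, so $A^{(+2)}\subset\{a\mid xa=0\}$. The paper labels this containment as $I(\alpha)\subset Ann_L(A(\alpha))$, but its stated definition $Ann_L(A)=\{a\mid ax=0\}$ would make that inclusion false (e.g.\ in $\mathfrak{L}_{40}$ one has $e_1^2=e_2$ yet $e_2e_1=e_3\neq 0$); moreover the paper's displayed definitions of $Ann_L$ and $Ann_R$ are verbatim identical, which is clearly a typo. Your version, placing $A^{(+2)}$ in $\{a\mid xa=0\}$, is the mathematically correct one regardless of what name is attached to it.
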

\begin{Proof} Since $A(\alpha)$ is non-Lie, the ideal $I(\alpha)=\{xy+yx\mid x,y\in A(\alpha)\}$ is nonzero for any $\alpha\in T$. Since $I(\alpha)\subset Ann_L\big(A(\alpha)\big)$, we have $Ann_L\big(A(\alpha)\big)>0=Ann(B)=Ann_L(B)$ for any $\alpha\in T$, and hence $A(*)\not\to B$.
\end{Proof}

It easily follows from Lemma \ref{trivAnn} that if $\mathcal{C}$ is an irreducible component in the variety of $n$-dimensional Lie algebras containing an algebra with  zero annihilator, then $\mathcal{C}$ is an irreducible component in $\mathfrak{Leib}_n$. Indeed, suppose that $\mathcal{C}_0$ is an irreducible component of $\mathfrak{Leib}_n$ containing $\mathcal{C}$. Let $\mathcal{L}$ be a set of all Lie algebras in $\mathcal{C}_0$.  We have  $\mathcal{C}\not\subset\overline{\mathcal{C}_0\setminus\mathcal{L}}$ by Lemma \ref{trivAnn}. Since $\mathcal{C}_0=\overline{\mathcal{C}_0\setminus\mathcal{L}}\cup\mathcal{L}$ and $\mathcal{C}_0$ is irreducible, we have $\mathcal{C}_0=\mathcal{L}$, and hence $\mathcal{C}_0=\mathcal{C}$.

\begin{corollary}\label{compLie} $\overline{O(\mathfrak{sl}_2\oplus \mathbb C)}$, $\overline{O(\mathfrak{R}_2)}$, $\overline{\{O(g_5(a))\}_{a \in\mathbb{C}}}$ and $\overline{\{O(g_4(a,b))\}_{a,b \in\mathbb{C}}}$ are irreducible components of $\mathfrak{Leib}_4$.
\end{corollary}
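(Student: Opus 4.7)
The plan is to apply the general observation stated in the paragraph immediately preceding the corollary: any irreducible component $\mathcal{C}$ of the variety of $n$-dimensional Lie algebras that contains an algebra with zero annihilator is automatically an irreducible component of $\mathfrak{Leib}_n$. Combined with the classification of four-dimensional Lie components from \cite{BC99}, which already identifies the four closures listed in the corollary as exactly the irreducible components of the Lie variety, my only remaining task is to exhibit, inside each of the four closures, at least one algebra $B$ with $Ann(B)=0$.

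The cases of $\mathfrak{sl}_2$ and $\mathfrak{R}_2$ are handled directly. For $\mathfrak{sl}_2$ the assertion is immediate: in a Lie algebra the annihilator equals the center, and the center of $\mathfrak{sl}_2$ vanishes by simplicity. For $\mathfrak{R}_2$, the multiplication table $e_1e_2=e_2$, $e_3e_4=e_4$ (plus antisymmetry) exhibits it as a direct sum of two copies of the unique non-abelian two-dimensional Lie algebra; each summand has trivial center, so $Ann(\mathfrak{R}_2)=0$.

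For the two continuous families I would specialize to convenient points, for instance $g_4(1,1)$ and $g_5(1)$, and carry out a short linear-algebra check. Writing $x=\sum_{i=1}^{4}a_ie_i$, the conditions $xe_j=e_jx=0$ for $j=1,\dots,4$ become a homogeneous linear system in $a_1,\dots,a_4$; inspecting the structure constants shows that the system has only the trivial solution whenever the parameters avoid the degenerate loci (one needs $ab\neq 0$ for $g_4(a,b)$, and $a(a+1)\neq 0$ for $g_5(a)$). Since these nondegenerate algebras lie in $\overline{\{O(g_4(a,b))\}_{a,b\in\mathbb{C}}}$ and $\overline{\{O(g_5(a))\}_{a\in\mathbb{C}}}$ respectively, the hypothesis of the lifting observation is met for both families.

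There is no serious obstacle: the substantive ingredient, Lemma \ref{trivAnn} together with its corollary about lifting Lie components, is already proved in the excerpt, so the proof reduces to the bookkeeping described above. The only small point requiring care is to avoid the special parameter values in the two families at which the annihilator fails to vanish; a generic choice of $a$ or of $(a,b)$ always works, and the specific representatives $g_4(1,1)$ and $g_5(1)$ suffice.
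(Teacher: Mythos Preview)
Your treatment of $\mathfrak{R}_2$, $g_4(a,b)$ and $g_5(a)$ is correct and essentially identical to the paper's: verify that the annihilator vanishes for a generic member of each family and invoke the lifting observation following Lemma~\ref{trivAnn}.

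The argument for $\mathfrak{sl}_2$, however, does not go through. In this paper the symbol $\mathfrak{sl}_2$ denotes a \emph{four}-dimensional structure; the multiplication table only involves $e_1,e_2,e_3$, so $e_4$ lies in the annihilator and $Ann(\mathfrak{sl}_2)=\langle e_4\rangle\neq 0$. Worse, annihilator dimension is lower semicontinuous, so every structure in $\overline{O(\mathfrak{sl}_2)}$ has annihilator of dimension at least one. Hence there is no algebra with zero annihilator anywhere in this component, and the lifting observation from Lemma~\ref{trivAnn} is simply inapplicable here.

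The paper handles $\mathfrak{sl}_2$ by an entirely different (and very short) argument: it is the unique non-solvable structure in $\mathfrak{Leib}_4$. Since the locus of solvable structures is Zariski-closed, the non-solvable locus is open and consists of the single orbit $O(\mathfrak{sl}_2)$; thus this orbit is open, i.e.\ $\mathfrak{sl}_2$ is rigid, and $\overline{O(\mathfrak{sl}_2)}$ is an irreducible component. You should replace your simplicity/center remark with this rigidity argument.
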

\begin{Proof} The structure $\mathfrak{sl}_2\oplus \mathbb C$ is rigid in $\mathfrak{Leib}_4$ as a unique non-solvable structure in the variety. The remaining part of the corollary follows from the fact that $Ann(\mathfrak{R}_2)=0$, $Ann\big(g_5(a)\big)=0$ for $a\not=-1$, and $Ann\big(g_4(a,b)\big)=0$ for all $a,b\in\mathbb{C}^*$.
\end{Proof}

\begin{Proof}[Proof of Theorem \ref{mainTh}.] Let $\mathcal{W}$ be the union of closed sets listed in the theorem. The assertions proved in Table 2 and the classification given in Table 1 show that all four dimensional non-nilpotent non-Lie Leibniz algebras belong to $\mathcal{W}$. All Lie algebras belong to $\mathcal{W}$ by the results of the paper \cite{BC99}.
Then it follows from Lemma \ref{norigidfam} that all nilpotent Leibniz algebras belong to $\mathcal{W}$ too.

Thus, it remains to show that for any two different sets $\mathcal{X}$ and $\mathcal{Y}$ listed in the theorem $\mathcal{X}\not\subset \mathcal{Y}$. It follows from Corollary \ref{compLie} and the fact that the set of four dimensional Lie algebras is a closed subset of $\mathfrak{Leib}_4$ that the required assertion is true if $\mathcal{X}$ or $\mathcal{Y}$ is formed by Lie algebras.

Let us start with the algebras $\mathfrak{R}_1$ and $\mathfrak{R}_3$. These are the only two algebras from Table 1 that have two dimensional nilpotent radical. Since all other algebras have a three dimensional nilpotent radical, $\mathfrak{R}_1$ and $\mathfrak{R}_3$ do not belong to the closure of orbits of all the remaining structures of $\mathfrak{Leib}_4$. Since $Ann_L(\mathfrak{R}_1)>Ann_L(\mathfrak{R}_3)$ and $(\mathfrak{R}_3)^{(+2)}<(\mathfrak{R}_1)^{(+2)}$, we have $\mathfrak{R}_1\not\to\mathfrak{R}_3$ and $\mathfrak{R}_3\not\to\mathfrak{R}_1$. Thus, $\overline{O(\mathfrak{R}_1)}$ and $\overline{O(\mathfrak{R}_3)}$ are irreducible components. Since $dim\,(\mathfrak{R}_1)^2=dim\,(\mathfrak{R}_3)^2=2$, these two components do not contain $\{\mathfrak{L}_4^a \}_{a \in\mathbb{C}}$,
$\{\mathfrak{L}_8^a \}_{a \in\mathbb{C}}$,
$\{\mathfrak{L}_9^a \}_{a \in\mathbb{C}}$,
$\{\mathfrak{L}_{10}^a \}_{a \in\mathbb{C}}$,
$\{\mathfrak{L}_{21}^{a,b} \}_{a,b \in\mathbb{C}}$,
$\{\mathfrak{L}_{22}^{a,b} \}_{a,b \in\mathbb{C}}$,
$\{\mathfrak{L}_{23}^{a,b} \}_{a,b \in\mathbb{C}}$,
$\mathfrak{L}_2$, and
$\mathfrak{L}_{44}$. Since $Ann_L(\mathfrak{R}_1)>Ann_L(\mathfrak{L}_{15}^a)=Ann_L(\mathfrak{L}_{18}^a)$ for any $a\not=0$, we have $\mathfrak{R}_1\not\to\mathfrak{L}_{15}^a,\mathfrak{L}_{18}^a$ for $a\not=0$. Note that $\mathfrak{R}_3$ contains a three dimensional anticommutative subalgebra $D=\langle e_2,e_3,e_4\rangle$ such that $dim\,(\mathfrak{R}_3D)=1$. Since both $\mathfrak{L}_{15}^a$ and $\mathfrak{L}_{18}^a$ do not have such a subalgebra for any $a\not=0$, we have $\mathfrak{R}_3\not\to\mathfrak{L}_{15}^a,\mathfrak{L}_{18}^a$ for $a\not=0$.

All the remaining algebras are solvable non-nilpotent Leibniz algebras with a three dimensional nilpotent radical. Moreover, one can check that each of them is represented by a structure $\mu$ such that $\langle e_2,e_3,e_4\rangle$ is the nilpotent radical and, moreover, the structure constants $c_{ij}^k$ ($1\le i,j,k\le 4$) satisfy the conditions $c_{ij}^k=0$ if $i,j\ge 2$ and $k\le \max(i,j)$ and $c_{1i}^j=c_{i1}^j=0$ for any $2\le i,j\le 4$ such that $j<i$. During this proof we will call a structure with three dimensional nilpotent radical that satisfies the described conditions a {\it standard structure}.
Let us put in correspondence to a standard structure $\mu$ the $6$-tuple $S_{\mu}=(c_{21}^2,c_{12}^2,c_{31}^3,c_{13}^3,c_{41}^4,c_{14}^4)\in\mathbb{C}^6$. It is not difficult to show that if $S_{\mu}=(a_1,b_1,a_2,b_2,a_3,b_3)$ and $\lambda\in O(\mu)$ is a standard structure, then there is some permutation $\sigma:\{1,2,3\}\rightarrow\{1,2,3\}$ and some $c\in\mathbb{C}^*$ such that $S_{\lambda}=(ca_{\sigma(1)},cb_{\sigma(1)},ca_{\sigma(2)},cb_{\sigma(2)},ca_{\sigma(3)},cb_{\sigma(3)})$.
Indeed, suppose that $\lambda$ and $\mu$ are standard structures and $\lambda=g^{-1}*\mu$ for some $g\in GL(V)$. Let us denote by $U$ the subspace $\langle e_2,e_3,e_4\rangle$ of $V$. Since $U$ is the nilpotent radical for $\mu$ and $\lambda$ at the same time, we have $g(U)=U$.
We have also $g(e_1)=ce_1+u$ for some $c\in\mathbb{C}^*$, and $u\in U$. Let $L,R:U\rightarrow U$ denote the operators of the left and right multiplications by $e_1$ with respect to $\lambda$ correspondingly.
Note that at the same time $L$ and $R$ are the operators of the left and right multiplications by $g(e_1)$ with respect to $\mu$ conjugated by $g$.
Then it follows from the definition of a standard structure that, for any $s,t\in\mathbb{C}$, on one hand the characteristic polynomial $\chi_{sR+tL}(x)\in\mathbb{C}[x]$ of $sR+tL$ is $\prod\limits_{i=1}^3\big(c(sa_i+tb_i)-x\big)$ and, on the other hand, the same polynomial is $\prod\limits_{i=1}^3(sa_i'+tb_i'-x)$, where $S_{\lambda}=(a_1',b_1',a_2',b_2',a_3',b_3')$. The comparing of the roots of the obtained polynomials gives the required assertion.

Suppose now that $\{\mu_{\alpha}\}_{\alpha\in T}$ is a set of standard structures, $S_{\mu_{\alpha}}=(a_{1,\alpha},b_{1,\alpha},a_{2,\alpha},b_{2,\alpha},a_{3,\alpha},b_{3,\alpha})$, and the linear homogeneous polynomials $f_1,\dots,f_l\in\mathbb{C}[x_1,x_2,x_3,x_4,x_5,x_6]$ are such that $f_r(a_{1,\alpha},b_{1,\alpha},a_{2,\alpha},b_{2,\alpha},a_{3,\alpha},b_{3,\alpha})=0$ for all $\alpha\in T$ and $1\le r\le l$. If $\lambda\in\overline{\{O(\mu(\alpha))\}_{\alpha\in T}}$ is a standard structure with $S_{\lambda}=(a_1,b_1,a_2,b_2,a_3,b_3)$, then there is some permutation $\sigma:\{1,2,3\}\rightarrow\{1,2,3\}$ and some $c\in\mathbb{C}^*$ such that $f_r(ca_{\sigma(1)},cb_{\sigma(1)},ca_{\sigma(2)},cb_{\sigma(2)},ca_{\sigma(3)},cb_{\sigma(3)})=0$ for all $1\le r\le l$. Indeed, the conditions from the definition of a standard structure with the conditions $f_r(c_{21}^2,c_{12}^2,c_{31}^3,c_{13}^3,c_{41}^4,c_{14}^4)=0$ ($1\le r\le l$) determine a closed subset of $\mathfrak{Leib}_4$ that is invariant under the action of lower triangular matrices. Thus, it follows from Lemma \ref{gmain} that $\lambda$ belongs to an orbit of some standard structure satisfying the equalities $f_r(c_{21}^2,c_{12}^2,c_{31}^3,c_{13}^3,c_{41}^4,c_{14}^4)=0$ ($1\le r\le l$).

Computation of $6$-tuples for the structures under consideration gives the following results:
$$\begin{array} {lll}
S_{\mathfrak{L}_2}=(1,-1,0,-1,1,0),         & S_{\mathfrak{L}_4^a}=(1,a,a+1,-1,0,0),        & S_{\mathfrak{L}_8^a}=(1,a,a+1,-1,-a,0),\\
S_{\mathfrak{L}_9^a}=(1,a,2,-1,-a,0),       & S_{\mathfrak{L}_{10}^a}=(1,a,2,-1,0,0),       & S_{\mathfrak{L}_{15}^a}=(0,1,0,0,-1,0),\\
S_{\mathfrak{L}_{18}^a}=(0,1,0,0,0,0),      & S_{\mathfrak{L}_{21}^{a,b}}=(1,a,b,-1,-a,0),  & S_{ \mathfrak{L}_{22}^{a,b}}=(1,a,b,-1,0,0),\\
S_{\mathfrak{L}_{23}^{a,b}}=(1,a,b,0,0,0),  & S_{\mathfrak{L}_{44}}=(1,2,3,-1,0,0).
\end{array}$$
Almost all the required assertions follow now from our $6$-tuple argument. Also the assertions of the form $A(*)\not\to B$, where $A(*)\in\{\mathfrak{L}_{21}^{*,*},\mathfrak{L}_{22}^{*,*},\mathfrak{L}_{23}^{*,*}\}$ and $B\in \{\mathfrak{L}_2,\mathfrak{L}_4^a,\mathfrak{L}_8^a,\mathfrak{L}_9^a,\mathfrak{L}_{10}^a,\mathfrak{L}_{15}^a,\mathfrak{L}_{18}^a,\mathfrak{L}_{44}\}_{a\in\mathbb{C}}$ follow from the fact that $A(a,b)$ has three dimensional trivial subalgebra for any $a,b\in\mathbb{C}$ and $B$ does not have three dimensional trivial subalgebra.
Let us consider the remaining assertions case by case.
\begin{itemize}
\item $\mathfrak{L}_4^*\not\to \mathfrak{L}_{44}$. The required assertion follows from the fact that $Der(\mathfrak{L}_{44})>Der(\mathfrak{L}_4^a)$ for any $a\in\mathbb{C}$.
\item To prove the assertions $\mathfrak{L}_9^*\not\to \mathfrak{L}_{15}^a$ and $\mathfrak{L}_{10}^*\not\to \mathfrak{L}_{18}^a$ let us consider the set
$$
\mathcal{R}=\left\{\mu\in\mathfrak{Leib}_4\left|\begin{array}{c}S_1S_1\subset S_2, S_2S_2\subset S_4,S_3S_1+S_1S_3\subset S_3,S_4S_1\subset S_4, S_1S_4+S_4S_2=0,\\c_{13}^3+c_{31}^3=0,c_{41}^4=2c_{31}^3,c_{13}^4=c_{31}^4,c_{23}^4=c_{32}^4\end{array}\right.\right\}.
$$
It is not difficult to show that $\mathcal{R}$ is a closed subset of $\mathfrak{Leib}_4$ that is stable under the action of the subgroup of lower triangular matrices and contains the structures $\mathfrak{L}_9^b$ and $\mathfrak{L}_{10}^b$ for any $b\in\mathbb{C}$. To see this it is enough to consider the basis $e_1$, $e_3$, $e_2$, $e_4$.
It is also not difficult to show that $\mathcal{R}\cap O(\mathfrak{L}_{15}^a)=\mathcal{R}\cap O(\mathfrak{L}_{18}^a)=\varnothing$ for any $a\in\mathbb{C}$.
\end{itemize}

\end{Proof}

\section{Appendix: Tables}


Table 1. {\it Four dimensional non-nilpotent non-Lie Leibniz algebras.}

\begin{longtable}{l|llllllllll} 
 
$\mathfrak{R}_1 $& 
$e_3e_1 = e_3$ &$e_4e_2=e_4$& \\ 
\hline

$\mathfrak{R}_3$ & 
$e_2e_4=-e_4$ & $e_3e_1 = e_3$ & $e_4e_2=e_4$\\ 
\hline 

$\mathfrak{L}_2$ & 
$e_1e_1=e_4$ & $e_1e_2=-e_2$ & $e_1e_3=e_3$ & $e_2e_1=e_2$ \\ & $e_2e_3=e_4$ &  $e_3e_1=-e_3$ &  $e_3e_2=-e_4$\\ 
\hline

$\mathfrak{L}_4^{a}$ & 
$e_1e_2=-e_2$ & $e_2e_1=e_2$ & $e_3e_1= a  e_3$  \\ & $e_3e_2=e_4$  & $e_4e_1=(1+a)e_4$\\ 
\hline

$\mathfrak{L}_5$  & 
$e_1e_1=e_4$ & $e_1e_2=-e_2$ &  $e_2e_1=e_2$ & $e_3e_1= - e_3$ &   $e_3e_2=e_4$ \\ 
\hline

$\mathfrak{L}_6$ & 
$e_1e_1=-e_3$ & $e_1e_2=-e_2$ & $e_2e_1=e_2+e_4$&   $e_3e_2=e_4$ &  $e_4e_1=e_4$\\ 
\hline

$\mathfrak{L}_7$ & 
$e_3e_1=e_3$& $e_3e_2=e_4$ &  $e_4e_1=e_4$ \\ 
\hline

$\mathfrak{L}_8^{a \neq-1,0}$ & 
$e_1e_2=-e_2$ &  $e_1e_3=-a e_3$ & $e_2e_1=e_2$ & $e_2e_3=a e_4$ \\&  $e_3e_1=a  e_3$  & $e_3e_2=e_4$ &  $e_4e_1=(a+1)e_4$ \\ 
\hline

$\mathfrak{L}_9^{a}$ & 
$e_1e_2=-e_2$ & $e_1e_3=-a e_3$ &  $e_2e_1=e_2$ & $e_2e_2=e_4$ \\&  $e_3e_1=a e_3$ &  $e_4e_1=2e_4$\\ 
\hline

$\mathfrak{L}_{10}^{a}$ & 
$e_1e_2=-e_2$ & $e_2e_1=e_2$ & $e_2e_2=e_4$ &  $e_3e_1=a e_3$ &  $e_4e_1=2e_4$\\ 
\hline

$\mathfrak{L}_{11}$ & 
$e_1e_1=e_3$ & $e_1e_2=-e_2$ & $e_2e_1=e_2$ & $e_2e_2=e_4$ &   $e_4e_1=2e_4$\\ 
\hline

$
\mathfrak{L}_{12}$ & 
$e_1e_2=-e_2$ &  $e_2e_1=e_2$ &   $e_2e_2=e_4$ & $e_3e_1=2e_3+e_4$ & $e_4e_1=2e_4$ \\ 
\hline

$\mathfrak{L}_{13}$ & 
$e_1e_2=-e_2-e_3$ & $e_1e_3=-e_3$ & $e_2e_1=e_2+e_3$ & $e_2e_2=e_4$\\ & $e_3e_1=e_3$ &  $e_4e_1=2e_4$  \\ 
\hline



$\mathfrak{L}_{14}$ & 
$e_1e_3=-e_3$ & $e_2e_2=e_4$ & $e_3e_1=e_3$ \\ 
\hline

$\mathfrak{L}_{15}^{a}$ & 
$e_1e_1=a e_4$ & $e_1e_2=e_4$ & $e_1e_3=-e_3$ &  $e_2e_2=e_4$ &   $e_3e_1=e_3$\\ 
\hline

$\mathfrak{L}_{16}$ & 
$e_1e_1=-2e_4$ &  $e_1e_3=-e_3$ &   $e_2e_2=e_4$ & $e_3e_1=e_3$     \\ 
\hline

$\mathfrak{L}_{17}$ & 
$e_2e_2=e_4$ & $e_3e_1=e_3$\\ 
\hline

$\mathfrak{L}_{18}^{a}$ & 
$e_1e_1=a  e_4 $&$ e_1e_2=e_4$& $e_2e_2=e_4$ & $e_3e_1=e_3 $      \\ 
\hline

$\mathfrak{L}_{19}$ & 
$ e_1e_2=e_4 $ & $e_2e_1=e_4 $& $e_2e_2=e_4$ & $e_3e_1=e_3$    \\ 
\hline

$\mathfrak{L}_{21}^{a, b \neq 0}$ & 
$e_1e_2=-e_2$ & $e_1e_3=- a  e_3$ & 
$e_2e_1=e_2$ & $e_3e_1= a  e_3$ &  $e_4e_1= b  e_4$ \\ 
\hline

$\mathfrak{L}_{22}^{a  \neq 0, b \neq 0}$ & 
$e_1e_2=-e_2$& $e_2e_1=e_2$ & $e_3e_1= a  e_3$ &  $e_4e_1= b  e_4$      \\ 
\hline

$\mathfrak{L}_{23}^{a,b}$ & 
$e_2e_1=e_2$ & $e_3e_1= a e_3$ &  $e_4e_1= b e_4$ \\ 
\hline

$\mathfrak{L}_{24}^{a}$ & 
$e_1e_1=e_4$ & $e_1e_2= - e_2$  &$e_1e_3= -a e_3$ & $e_2e_1=e_2$ & $e_3e_1= a e_3$ \\ 
\hline

$\mathfrak{L}_{25}^{a\neq 0}$ & 
$e_1e_1 =e_4$&  $e_1e_2=-e_2$&  $e_2e_1=e_2$ & $e_3e_1= a e_3$       \\ 
\hline

$\mathfrak{L}_{26}^{a}$ & 
$e_1e_1=e_4$& $e_2e_1=e_2$ & $e_3e_1= a e_3$ \\ 
\hline

$\mathfrak{L}_{27}$ & 
$e_1e_2= -e_2$ &  $e_1e_3= e_4$ & $e_2e_1=e_2$ \\ 
\hline

$\mathfrak{L}_{28}$ & 
$e_1e_3= e_4$ & $e_2e_1=e_2$ \\ 
\hline

$\mathfrak{L}_{29}^{a\neq0}$ & 
$e_1e_2=-e_2-e_3$ & $e_1e_3=- e_3$ & $e_2e_1=e_2+e_3$ & $e_3e_1= e_3$ &  $e_4e_1= a e_4$ \\ 
\hline

$\mathfrak{L}_{30}$ & 
$e_1e_1 =e_4$& $e_1e_2=-e_2-e_3$ & $ e_1e_3=- e_3$ &  $e_2e_1=e_2+e_3$ & $e_3e_1= e_3$ \\ 
\hline

$\mathfrak{L}_{32}^{a}$ & 
$e_2e_1=e_2+e_3$ & $e_3e_1= e_3$ &  $e_4e_1= a e_4$ \\ 
\hline

$\mathfrak{L}_{33}$& 
$e_1e_1=  e_4$& $e_2e_1=e_2+e_3$ & $e_3e_1= e_3$\\ 
\hline

$\mathfrak{L}_{34}^{a \neq 0}$ & 
$e_1e_4 =- a e_4 $ & $e_2e_1=e_2+e_3$ & $e_3e_1= e_3$ &  $e_4e_1= a e_4$    \\ 
\hline

$\mathfrak{L}_{35}^{a\neq-1}$ & 
$e_1e_2=a e_3$ & $e_1e_4=- e_4$ & $e_2e_1=e_3$ & $e_4e_1= e_4$\\ 
\hline

$\mathfrak{L}_{36}$ & 
$e_1e_1 =e_3$&  $e_1e_2=-e_3$ & $e_1e_4=- e_4$ & $e_2e_1=e_3$ & $e_4e_1= e_4$ \\ 
\hline

$\mathfrak{L}_{37}$& 
$e_1e_1=e_2$ &  $e_1e_4= - e_4$ & $e_2e_1=e_3$ & $e_4e_1=  e_4$\\ 

\hline

$\mathfrak{L}_{38}^{a}$ & 
$e_1e_2=ae_3$ & $e_2e_1=e_3$ & $e_4e_1=  e_4$ \\ 
\hline

$\mathfrak{L}_{39}$ & 
$e_1e_1=e_3$ & $e_1e_2= - e_3$ & $e_2e_1=e_3$ & $e_4e_1= e_4$ \\ 
\hline

$\mathfrak{L}_{40}$ & 
$e_1e_1=  e_2$& $e_2e_1=e_3$ & $e_4e_1= e_4$ \\ 
\hline

$\mathfrak{L}_{41}$ & 
$e_2e_1=e_2+e_3$ & $e_3e_1= e_3+e_4$ &  $e_4e_1=  e_4$ \\ 
\hline

$\mathfrak{L}_{44}$ & 
$e_1e_2 =- e_2$ & $e_2e_1=  e_2$ & $e_2e_2=e_3$ 
&   $e_3e_1= 2e_3$ \\ 
& $e_3e_2= e_4$&   $e_4e_1=3e_4$ 
\\ 
\hline

\end{longtable}

\normalsize 

\

Table 2.   
{\it Orbit closures for some families and degenerations of four dimensional Leibniz algebras.}

 {\tiny$$\begin{array}{|rcl|llll|l|}
\hline

\multicolumn{3}{|l|}{\mbox{Assertions}} &  \mbox{Parametrized bases} &&& & \mbox{Parametrized indices} \\
\hline
\hline

{\red
\mathfrak{L}_{8}^* } &\to& \mathfrak{L}_{2}&

 E_1^t= e_1 + (t-1) e_4,&
 E_2^t= t e_2,&
 E_3^t= e_3,&
 E_4^t=(t-1) t e_4
& \epsilon(t)=t-1 \\

\hline
\mathfrak{L}_{4}^*  &\to& \mathfrak{L}_{5}      & E_1^t=e_1+e_4, & E_2^t=e_2, & E_3^t=te_3, & E_4^t=te_4 & \epsilon(t)=t-1  \\

\hline
\mathfrak{L}_{4}^*  &\to& \mathfrak{L}_{6}      & E_1^t=e_1-e_3, & E_2^t=e_2+e_4, & E_3^t=te_3, & E_4^t=te_4 & \epsilon(t)=t \\

\hline
\mathfrak{R}_1      &\to& \mathfrak{L}_{7}      & E_1^t=e_1+e_2, & E_2^t=te_2, & E_3^t=e_3+e_4, & E_4^t=te_4 & \\

\hline
\mathfrak{L}_{10}^* &\to& \mathfrak{L}_{11}     & E_1^t=e_1+e_3, & E_2^t=e_2, & E_3^t=te_3, & E_4^t=e_4 & \epsilon(t)=t \\

\hline
\mathfrak{L}_{10}^* &\to& \mathfrak{L}_{12}     & E_1^t=e_1, & E_2^t=te_2, & E_3^t=e_3+e_4, & E_4^t=t^2e_4 & \epsilon(t)=2-t^2 \\

\hline
\mathfrak{L}_{9}^*  &\to& \mathfrak{L}_{13}     & E_1^t=e_1, & E_2^t=e_2+e_3, & E_3^t=te_3, & E_4^t=e_4 & \epsilon(t)=t+1 \\

\hline
\mathfrak{L}_{16}   &\to& \mathfrak{L}_{14}     & E_1^t=e_1, & E_2^t=\frac{1}{t}e_2, & E_3^t=e_3, & E_4^t=\frac{1}{t^2}e_4 & \\

\hline
\mathfrak{L}_{15}^* &\to& \mathfrak{L}_{16}     & E_1^t=e_1, & E_2^t=\frac{1}{t}e_2, & E_3^t=e_3, & E_4^t=\frac{1}{t^2}e_4 & \epsilon(t)=-\frac{2}{t^2} \\

\hline
\mathfrak{R}_1      &\to& \mathfrak{L}_{17}     & E_1^t=e_1, & E_2^t=te_2+e_4, & E_3^t=e_3, & E_4^t=te_4 & \\

\hline
\mathfrak{L}_{18}^* &\to& \mathfrak{L}_{19}     & E_1^t=e_1+\frac{1}{t}e_2, & E_2^t=\frac{1}{t}e_2, & E_3^t=e_3, & E_4^t=\frac{1}{t^2}e_4 & \epsilon(t)=-\frac{1}{t^2} \\

\hline
\mathfrak{L}_{21}^{*,*} &\to& \mathfrak{L}_{24}^{a} & E_1^t=e_1+e_4, & E_2^t=e_2, & E_3^t=e_3, & E_4^t=te_4 & \epsilon(t)=(a,t) \\

\hline
\mathfrak{L}_{22}^{*,*} &\to& \mathfrak{L}_{25}^{a} & E_1^t= e_1+e_4, & E_2^t=e_2, & E_3^t=e_3, & E_4^t=te_4 & \epsilon(t)=(a,t) \\

\hline
\mathfrak{L}_{23}^{*,*} &\to& \mathfrak{L}_{26}^{a} & E_1^t= e_1+e_4, & E_2^t=e_2, & E_3^t=e_3, & E_4^t=te_4 & \epsilon(t)=(a,t) \\

\hline
\mathfrak{R}_3      &\to& \mathfrak{L}_{27}    & E_1^t=e_2+e_3, & E_2^t=e_4, & E_3^t=te_1, & E_4^t=te_3 & \\

\hline
\mathfrak{R}_1      &\to& \mathfrak{L}_{28}    & E_1^t=e_1+e_4, & E_2^t=e_3, & E_3^t=te_2, & E_4^t=te_4 & \\

\hline
\mathfrak{L}_{21}^{*,*} &\to& \mathfrak{L}_{29}^{a} & E_1^t=e_1, & E_2^t=e_2+e_3, & E_3^t=te_3, & E_4^t=e_4 & \epsilon(t)=(t+1,a) \\

\hline
\mathfrak{L}_{29}^{*}   &\to& \mathfrak{L}_{30} & E_1^t=e_1+e_4, & E_2^t=e_2, & E_3^t=e_3, & E_4^t=te_4 & \epsilon(t)=t \\

\hline
\mathfrak{L}_{23}^{*,*} &\to& \mathfrak{L}_{32}^{a} & E_1^t=e_1, & E_2^t=e_2+e_3, & E_3^t=te_3, & E_4^t=e_4 & \epsilon(t)=(t+1,a) \\

\hline
\mathfrak{L}_{26}^{*}   &\to& \mathfrak{L}_{33} & E_1^t=e_1, & E_2^t=e_2+e_3, & E_3^t=te_3, & E_4^t=e_4 & \epsilon(t)=t+1 \\

\hline
\mathfrak{L}_{22}^{*,*} &\to& \mathfrak{L}_{34}^{a} & E_1^t=a e_1, & E_2^t=e_3+e_4, & E_3^t=te_4, & E_4^t=e_2 & \epsilon(t)=\left(\frac{1}{a},\frac{t+1}{a}\right) \\

\hline
\mathfrak{L}_{15}^{\frac{-a}{(1-a)^2}} &\to&  \mathfrak{L}_{35}^{a\not=1}    & E_1^t=e_1+\frac{1}{a-1}e_2, & E_2^t=t(a-1)e_2, & E_3^t=te_4, & E_4^t=e_3 &\\

\hline
\mathfrak{L}_{24}^{*} &\to& \mathfrak{L}_{36} & E_1^t=te_1, & E_2^t=e_2-te_4, & E_3^t=t^2e_4, & E_4^t=e_3 & \epsilon(t)=\frac{1}{t} \\

\hline
\mathfrak{L}_{34}^{*} &\to& \mathfrak{L}_{37} & E_1^t=te_1+e_2, & E_2^t=t(e_2+e_3), & E_3^t=t^2e_3, & E_4^t=e_4 & \epsilon(t)=\frac{1}{t} \\

\hline
\mathfrak{L}_{18}^{\frac{-a}{(1-a)^2}} & \to &  \mathfrak{L}_{38}^{a\not=1}    &  E_1^t=e_1+\frac{1}{a-1}e_2, & E_2^t=t(a-1)e_2, & E_3^t=te_4, & E_4^t=e_3 & \\

\hline
\mathfrak{L}_{22}^{*,*} &\to& \mathfrak{L}_{39} & E_1^t=te_1+\frac{1}{t}e_3, & E_2^t=e_2-e_3, & E_3^t=te_3, & E_4^t=e_4 & \epsilon(t)=\left(t,\frac{1}{t}\right) \\

\hline
\mathfrak{L}_{23}^{*,*} &\to& \mathfrak{L}_{40} & E_1^t=te_1+e_2+e_3, & E_2^t=te_2+2te_3, & E_3^t=2t^2e_3, & E_4^t=e_4 & \epsilon(t)=\left(2,\frac{1}{t}\right) \\

\hline
\mathfrak{L}_{23}^{*,*} &\to& \mathfrak{L}_{41} & E_1^t=e_1, & E_2^t=e_2+2e_3+e_4, & E_3^t=2t(e_3+e_4), & E_4^t=2t^2e_4 & \epsilon(t)=(t+1,2t+1) \\

\hline

\end{array}$$

}

\normalsize

\end{document}